\newtheorem{prop}{Proposition}[section]
\newtheorem{theorem}{Theorem}[section]
\newtheorem{cor}{Corollary}[section]
\newtheorem{defi}{Definition}[section]
\newtheorem{lemma}{Lemma}[section]
\numberwithin{equation}{section}
\makeatletter \@addtoreset{equation}{section}
\renewcommand{\thefootnote}{\fnsymbol{footnote}}
\begin{document}

\allowdisplaybreaks

\centering \Large On a stochastic Hardy-Littlewood-Sobolev inequality with application to Strichartz estimates for the white noise dispersion
\vspace{2em}

\large
\begin{tabular}{c c c}
&&\\
Romain Duboscq \footnotemark[1] & & Anthony R\'eveillac \footnotemark[2] \vspace{1em} \normalsize\\
\end{tabular}

\begin{center}
INSA de Toulouse \footnotemark[3] \\
IMT UMR CNRS 5219 \\
Universit\'e de Toulouse 
\end{center}

\footnotetext[1]{\texttt{romain.duboscq@insa-toulouse.fr}}
\footnotetext[2]{\texttt{anthony.reveillac@insa-toulouse.fr}}
\footnotetext[3]{135 avenue de Rangueil 31077 Toulouse Cedex 4 France}

\abstract{\noindent 
In this paper, we investigate a stochastic Hardy-Littlewood-Sobolev inequality. Due to the stochastic nature of the inequality, the relation between the exponents of integrability is modified. This modification can be understood as a regularization by noise phenomenon. As a direct application, we derive Strichartz estimates for the white noise dispersion which enables us to address a conjecture from \cite{belaouar2015numerical}.}

\vspace{1em}
{\noindent \textbf{Key words:} Stochastic regularization; Stochastic Partial Differential Equations; Nonlinear Schrödinger equation, Hardy-Littlewood-Sobolev inequality.
}

\vspace{1em}
\noindent
{\noindent \textbf{AMS 2010 subject classification:} Primary: 35Q55; Secondary: 60H15.}

\renewcommand{\thefootnote}{\fnsymbol{footnote}}

\allowdisplaybreaks

\section{Introduction}

Let $(\Omega, \mathbb{P})$ be the standard probability space endowed with the Wiener filtration $(\mathcal{F}_t)_{t\geq 0}$. The main objective of this paper is to address the local Cauchy problem for the following nonlinear Schrödinger equation with white noise dispersion
\begin{equation}\label{eq:main}
\left\{\begin{array}{ll}
i d\psi(t,x) = -\Delta \psi(t,x) \circ dW_t(\omega) + \lambda |\psi|^{2\sigma} \psi(t,x)dt,\quad \forall (t,x,\omega)\in[0,T]\times\mathbb{R}^d\times\Omega,
\\ \psi(0,\cdot) = \psi_0\in L^2(\mathbb{R}^d),
\end{array}\right.
\end{equation}
where $\lambda \in\mathbb{R}$ and $(W_t)_{t\geq 0}$ is the Wiener process and the product $\circ$ is understood in the Stratonovich sense.

For $d=1$ and $\sigma = 1$, this equation arises in the field of nonlinear optics as a limit model for the propagation of light pulse in an optical fiber where the dispersion varies along the fiber \cite{agrawal2007nonlinear,agrawal2001applications}. These variations in the dispersion accounts for the so-called \textit{dispersion management} which aims to improve the transmission of a light signal by constructing a zero-mean dispersion fiber in order to avoid the problem of the chromatic dispersion of the light signal. When the variations are assumed to be random, a noisy dispersion can be derived  (see \cite{marty2006splitting, de2010nonlinear}) which leads, in the white noise case, to Equation \eqref{eq:main}.

As part of the problems concerning the propagation of waves in random media, there is a vast literature around random Schrödinger equations. Let us mention in particular the cases of random potentials \cite{erdHos2008quantum,erdHos2000linear} and noisy potentials \cite{de1999stochastic,de2002effect,de2005blow}. In these works, the effects of the stochastic potential greatly affect the dynamic of the Schrödinger equation and are, in a broader context, a motivation to introduce randomness in PDEs. Specifically, there is a well known effect which attracted a lot of attention: the so-called \textit{regularization by noise} phenomenon (see \cite{flandoli2011random} for a survey). This phenomenon can be summarized as an improvement, due to the presence of noise, of the well-posedness of differential equations and has been studied in the context of SDEs \cite{zvonkin1974transformation,veretennikov1981strong,krylov2005strong,priola2012pathwise,CATELLIER20162323}, transport equation \cite{flandoli2010well,fedrizzi2013noise,catellier2016rough}, SPDEs \cite{da2013strong} and scalar conservation laws \cite{gess2017long}. We remark that obtaining a regularization by noise in the context of nonlinear random PDE is a challenging task and most of the results are obtained in a linear setting. For instance, an open problem is to obtain a regularization by noise for the Euler or Navier-Stokes equations.

We are not the first one to investigate the Cauchy problem of Equation \eqref{eq:main}. It was first studied in \cite{de2010nonlinear} where the global Cauchy problem was solved for $\sigma < d/2$ which corresponds to a classical $L^2$-subcritical nonlinearity. In \cite{debussche20111d}, the authors proved that, in the $L^2$-critical case, when $d = 1$ and $\sigma = 5$, the solutions are globally well-posed, which is not the case for the deterministic nonlinear Schrödinger equation and, thus, hints for a regularization by noise effect. In \cite{chouk2015nonlinear}, the authors study the case where the Wiener process is replaced by a fractional Wiener process and recover similar results as in \cite{de2010nonlinear}. By a simple scaling argument on the space and time variables of \eqref{eq:main} and thanks to the scaling invariance of the Wiener process, it was conjectured in \cite{belaouar2015numerical} that, in fact, the critical nonlinearity should be $\sigma = 4/d$, a $L^2$-supercritical nonlinearity, which is twice as large as the deterministic $L^2$-critical nonlinearity. Furthermore, this fact was supported by numerical simulations in $1$D and leads to believe that the white noise dispersion has a strong regularization effect.

In this paper, we address the global Cauchy problem \eqref{eq:main} for $\sigma<4/d$. To be more specific, we obtain the following result.

\begin{theorem}\label{thm:main}
Let $\sigma<\frac{4}{d}$, $\psi_0\in L^2(\mathbb{R}^d)$ and $a\in (2,\infty)$ such that $\frac{d\sigma}4< \frac{2(\sigma+1)}a< 1$.
Then, for almost all $\omega\in\Omega$, there exists a unique solution $\psi\in L^{a}([0,+\infty[;L^{2\sigma+2}(\mathbb{R}^d))$ to \eqref{eq:main}.
\end{theorem}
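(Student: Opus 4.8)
The plan is to solve \eqref{eq:main} in its mild (Duhamel) form by a fixed--point argument in $X_T:=L^{a}([0,T];L^{r}(\real^d))$, with $r:=2\sigma+2$, the genuinely new ingredient being a set of \emph{stochastic Strichartz estimates} deduced from the stochastic Hardy--Littlewood--Sobolev inequality announced in the abstract. Since the linear Stratonovich equation $i\,d\psi=-\Delta\psi\circ dW_t$ obeys the ordinary chain rule, its flow is the random Schr\"odinger group $\mathcal{S}(t):=e^{iW_t\Delta}$, which is unitary on $L^2$; writing $\mathcal{S}(t,s):=\mathcal{S}(t)\mathcal{S}(s)^{-1}=e^{i(W_t-W_s)\Delta}$ for the associated two--parameter propagator, a solution of \eqref{eq:main} is precisely a fixed point of
\[ \Phi(\psi)(t):=\mathcal{S}(t)\psi_0-i\lambda\int_0^t\mathcal{S}(t,s)\bigl(|\psi|^{2\sigma}\psi\bigr)(s)\,ds . \]

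The core of the argument is the pair of space--time bounds for $\mathcal{S}$. Starting from the dispersive estimate $\|\mathcal{S}(t,s)f\|_{L^{r}}\lesssim|W_t-W_s|^{-\gamma}\|f\|_{L^{r'}}$, with $\gamma=d\bigl(\tfrac12-\tfrac1r\bigr)=\tfrac{d\sigma}{2(\sigma+1)}$, and running the $TT^{*}$ machinery, the inhomogeneous term of $\Phi$ is, after taking expectations, dominated by a convolution with the \emph{random} kernel $|W_t-W_s|^{-\gamma}$ in place of the classical $|t-s|^{-\gamma}$. This is exactly where the stochastic Hardy--Littlewood--Sobolev inequality intervenes: because the Brownian increments scale like $|W_t-W_s|\approx|t-s|^{1/2}$, the effective singularity of the kernel is \emph{halved}, so the admissible range of exponents is governed by $\gamma/2$ rather than $\gamma$ --- this is the mechanism behind the threshold $\sigma<4/d$, twice the deterministic $L^2$--critical exponent $2/d$, and behind the two--sided condition $\tfrac{d\sigma}{4}<\tfrac{2(\sigma+1)}{a}<1$, which is precisely the requirement that the corresponding ``stochastically admissible'' window for the pair $(a,r)$ be non--empty. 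Granting the inequality, one obtains, for a suitable moment exponent $\rho\in[1,\infty)$, a homogeneous estimate $\E\bigl[\|\mathcal{S}(\cdot)\psi_0\|_{L^{a}([0,\infty);L^{r})}^{\rho}\bigr]^{1/\rho}\lesssim\|\psi_0\|_{L^2}$ and an inhomogeneous estimate $\E\bigl[\|\int_0^{\cdot}\mathcal{S}(\cdot,s)F(s)\,ds\|_{L^{a}([0,T];L^{r})}^{\rho}\bigr]^{1/\rho}\lesssim\E\bigl[\|F\|_{L^{q}([0,T];L^{r'})}^{\rho}\bigr]^{1/\rho}$ for the appropriate exponent $q$, together with the companion $L^{\infty}([0,T];L^2)$ bounds needed to place the solution in $C([0,T];L^2)$.

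With these estimates the fixed--point step is routine. By H\"older in space, $\bigl\||\psi|^{2\sigma}\psi\bigr\|_{L^{r'}_x}=\|\psi\|_{L^{r}_x}^{2\sigma+1}$ --- this is the reason for the choice $r=2\sigma+2$ --- and by H\"older in time the condition $\tfrac{2(\sigma+1)}{a}<1$ (used together with $\sigma<4/d$) leaves a positive power $T^{\theta}$ to spare when estimating $\bigl\||\psi|^{2\sigma}\psi\bigr\|_{L^{q}_tL^{r'}_x}$ by $\|\psi\|_{L^{a}_tL^{r}_x}^{2\sigma+1}$, so that the problem is genuinely subcritical. Hence on $L^{\rho}\bigl(\Omega;L^{a}([0,T];L^{r})\bigr)$ the map $\Phi$ sends a ball into itself and contracts there as soon as $T$ is small in terms of $\|\psi_0\|_{L^2}$ only, which produces a unique local solution, lying in addition in $L^{\rho}(\Omega;C([0,T];L^2))$. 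One then checks mass conservation, $\|\psi(t)\|_{L^2}=\|\psi_0\|_{L^2}$ $\P$-a.s.\ (from the equation and unitarity of $\mathcal{S}$), which prevents the local existence time from shrinking along the iteration and yields a global solution; local uniqueness propagates to global uniqueness by connectedness. The stronger conclusion $\psi\in L^{a}([0,\infty);L^{r})$ for $\P$-a.e.\ $\omega$ --- which, in contrast with the deterministic $L^2$--subcritical regime, where solitary waves may persist, records a genuine dispersive effect of the noise --- is then obtained by re--running the above along the iteration with the \emph{global--in--time} version of the stochastic Strichartz estimates, and extracting the almost sure statement from the resulting finite moments.

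The main obstacle is, beyond any doubt, the stochastic Hardy--Littlewood--Sobolev inequality itself, i.e.\ the control of $\E\bigl[\|\int_0^{\cdot}|W_{\cdot}-W_s|^{-\gamma}g(s)\,ds\|_{L^{p}_t}^{\rho}\bigr]$ with the improved exponent relation: the kernel is random and, unlike $|t-s|^{-\gamma}$, is tame only after averaging, so the inequality cannot come from a naive moment bound on the kernel (whose first moment already diverges once $\gamma\ge1$); it must instead be handled pathwise, exploiting the Markov and independence structure of the Brownian increments and, very likely, the occupation/local time of $W$. A secondary but genuine technical point is the bookkeeping between the probabilistic norms $L^{\rho}(\Omega;\cdot)$ in which the estimates and the contraction naturally live and the almost sure statement of Theorem~\ref{thm:main}, together with the adaptedness and measurability of all objects involved with respect to the Wiener filtration $(\mathcal{F}_t)_{t\ge0}$.
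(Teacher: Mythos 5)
Your overall skeleton (mild formulation, dispersive estimate for the random propagator, $TT^*$, stochastic Hardy--Littlewood--Sobolev as the key input, contraction on a ball of $L^a_tL^{2\sigma+2}_x$ with a spare power $T^{1-\frac{2\sigma+2}{a}}$, globalization via mass conservation) matches the paper's, and your heuristic for the halving of the singularity exponent is exactly right. But there is one structural choice that creates a genuine gap: you set up the Strichartz estimates and the fixed point in the moment space $L^{\rho}\bigl(\Omega;L^{a}([0,T];L^{r})\bigr)$. In that space the power nonlinearity does not close: from $\bigl\||\psi|^{2\sigma}\psi\bigr\|_{L^{q}_tL^{r'}_x}\lesssim T^{\theta}\|\psi\|_{L^{a}_tL^{r}_x}^{2\sigma+1}$ you get, after taking $\rho$-th moments,
\begin{equation*}
\E\Bigl[\bigl\|\Phi(\psi)\bigr\|_{L^{a}_tL^{r}_x}^{\rho}\Bigr]^{1/\rho}\lesssim \|\psi_0\|_{L^2}+T^{\theta}\,\E\Bigl[\|\psi\|_{L^{a}_tL^{r}_x}^{(2\sigma+1)\rho}\Bigr]^{1/\rho},
\end{equation*}
so the right-hand side involves the $(2\sigma+1)\rho$-th moment while the left-hand side controls only the $\rho$-th; the map is not a self-map of a ball of $L^{\rho}(\Omega;X)$, and the Lipschitz estimate has the same exponent mismatch after H\"older in $\omega$. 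Sending $\rho\to\infty$ does not help because the random Strichartz ``constant'' is not bounded uniformly in $\omega$. This is precisely the obstruction that, as the introduction notes, prevented the earlier moment-based stochastic Strichartz estimates from reaching $L^2$-supercritical nonlinearities; without a truncation or stopping-time device your contraction step fails as written.

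The paper's way around this is the essential point you should not average away: Theorem \ref{thm:HLSsto} is \emph{pathwise}. It produces a single null set $\mathcal{N}$ (depending only on $T$, $\alpha$) such that for every $\omega\notin\mathcal{N}$ the inequality holds with a deterministic constant, uniformly over all $f,g$. Consequently Proposition \ref{prop:Strichartz} holds $\omega$ by $\omega$ with constants $C_1,C_2$ independent of $\omega$, and the fixed point is run as a purely deterministic argument in $L^{a}([0,T];L^{2\sigma+2})$ for each fixed $\omega\notin\mathcal{N}$, where the nonlinearity closes in the usual way. If you replace your $L^{\rho}(\Omega;\cdot)$ estimates by these almost-sure ones, the rest of your argument (including your explicit appeal to mass conservation to iterate the local construction, which the paper leaves implicit) goes through; the adaptedness concerns you raise also disappear, since the equation is solved pathwise. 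Your speculation that the proof of the stochastic HLS inequality requires local times is not borne out (the paper uses moment bounds on $\int\int|W_t-W_s|^{-\alpha}$ over dyadic blocks, exponential Chebyshev, and Borel--Cantelli), but that concerns Theorem \ref{thm:HLSsto}, not the statement at hand.
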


The classical approach to investigate the Cauchy problem for nonlinear Schrödinger equations is to derive local Strichartz estimates \cite{cazenave2003semilinear}. These estimates are a direct consequence of the dispersive property of the linear operator $i\Delta$. However, as pointed out in \cite{debussche20111d}, it is much harder to obtain such estimates in the case of a white noise dispersion because of the presence of the Wiener process. We remark that the strategy used in \cite{de2010nonlinear} does rely on stochastic Strichartz estimates but these were not efficient enough to handle $L^2$-supercritical nonlinearities.

Let us now explain our approach to deduce Strichartz estimates for \eqref{eq:main}. We recall from \cite{marty2006splitting,de2010nonlinear} that the propagator associated to the linear part of \eqref{eq:main} is explicitly given by, $\forall t,s\in (0,\infty)$, $\forall \omega\in \Omega$ and $\forall \varphi\in\mathcal{C}^{\infty}_0(\mathbb{R}^d)$,
\begin{align}
P_{s,t}(\omega)\varphi(x) : &= (2\pi)^{-d}\int_{\mathbb{R}^d} e^{-i|\xi|^2(W_t(\omega)-W_s(\omega))} \hat{\varphi}(\xi) e^{i\xi\cdot x} d\xi \nonumber
\\ &= \frac{1}{(4\pi(W_t(\omega)-W_s(\omega)))^{d/2}}\int_{\mathbb{R}^d} e^{i\frac{|x-y|^2}{4(W_t(\omega)-W_s(\omega))}} \varphi(y)dy.\label{eq:propagator}
\end{align}

Following the classical proof of Strichartz estimates (see for instance \cite{keel1998endpoint}), a fundamental tool is the Hardy-Littlewood-Sobolev inequality \cite{hardy1928some,hardy1932some,sobolev1938theorem} which is stated below. 
\begin{theorem}\label{thm:HLS}
Let $T>0$, $\alpha\in (0,1)$ and $f\in L^p([0,T])$ and $g\in L^q([0,T])$ such that $p,g\in(1,\infty)$ and
\begin{equation*}
2-\alpha = \frac 1 p + \frac 1 q.
\end{equation*}
Then, there exists a constant $C>0$ which depends on $p$ and $q$ such that the following inequality holds
\begin{equation}\label{eq:cHLS}
\left|\int_0^T \int_0^T f(t)|t-s|^{-\alpha} g(s) ds dt \right| \leq C \|f\|_{L^p([0,T])} \|g\|_{L^q([0,T])}.
\end{equation}
\end{theorem}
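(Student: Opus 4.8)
The plan is to reduce this bounded-interval statement to the classical Hardy--Littlewood--Sobolev inequality on the whole line. First I would extend $f$ and $g$ by zero outside $[0,T]$, which leaves their $L^p$ and $L^q$ norms unchanged and does not alter the double integral in \eqref{eq:cHLS}; the left-hand side then becomes the usual HLS bilinear form on $\mathbb{R}$ with dimension $n=1$ and singularity exponent $\alpha\in(0,1)$ in place of $\lambda$. Since the hypothesis $2-\alpha=\frac1p+\frac1q$ is exactly the scaling identity $\frac1p+\frac1q+\frac{\lambda}{n}=2$ required by the classical theorem (see e.g. Lieb--Loss or Stein), inequality \eqref{eq:cHLS} follows immediately, with a constant depending only on $p$ and $q$.

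For completeness I would also record the self-contained route. By Hölder's inequality the left-hand side of \eqref{eq:cHLS} is bounded by $\|g\|_{L^q([0,T])}\,\|I_\alpha f\|_{L^{q'}([0,T])}$, where $I_\alpha h(t):=\int_0^T|t-s|^{-\alpha}h(s)\,ds$ and $\frac1{q'}=1-\frac1q=\frac1p+\alpha-1$ by the hypothesis. It therefore suffices to prove the mapping property $\|I_\alpha f\|_{L^{q'}([0,T])}\le C\|f\|_{L^p([0,T])}$; note that the hypotheses force $\frac1{q'}=\frac1p-(1-\alpha)\in(0,1)$, so this is a genuine gain of integrability, i.e. the one-dimensional fractional-integration theorem. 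The kernel $t\mapsto|t|^{-\alpha}$ lies in the weak Lebesgue space $L^{1/\alpha,\infty}$, and the generalized (weak-type) Young inequality --- equivalently, Marcinkiewicz interpolation between the endpoint weak-type bounds --- yields the estimate, since $1+\frac1{q'}=\alpha+\frac1p$ with $p,q'\in(1,\infty)$.

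The only delicate point is precisely this mapping property: because $|t|^{-\alpha}$ belongs to weak-$L^{1/\alpha}$ but not to $L^{1/\alpha}$, the plain Young convolution inequality falls just short, and one must pass through weak-type endpoint estimates together with real interpolation, or else run the classical dyadic argument, splitting $|t-s|^{-\alpha}$ according to whether $|t-s|$ is above or below a threshold and then optimizing over the threshold. On the bounded interval $[0,T]$ the far-diagonal part is harmless, so the whole difficulty sits near the diagonal, and this is where the strict inequalities $1<p,q<\infty$ enter. Since the result is standard, in the paper I would simply invoke the classical inequality after the zero-extension and move on.
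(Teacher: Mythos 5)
The paper gives no proof of Theorem \ref{thm:HLS} at all: it is quoted as the classical Hardy--Littlewood--Sobolev inequality with references to Hardy, Littlewood and Sobolev, exactly as you propose to do. Your argument is correct --- the zero-extension reduces \eqref{eq:cHLS} to the classical bilinear form on $\mathbb{R}$ with $n=1$, $\lambda=\alpha$, whose scaling condition $\frac1p+\frac1q+\alpha=2$ is the stated hypothesis, and your back-up route (H\"older, then the mapping property $I_\alpha\colon L^p\to L^{q'}$ with $\frac1{q'}=\frac1p+\alpha-1$ via weak-type Young/Marcinkiewicz) has the exponents right --- so it is consistent with, and more detailed than, what the paper does.
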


From here, if we wish to follow the classical arguments to derive dispersive estimates, the main difficulty is to prove inequality \eqref{eq:cHLS} but replacing the potential $|t-s|^{-\alpha}$ with $|W_t(\omega)-W_s(\omega)|^{-\alpha}$. This is the point of the following theorem, which is the second result of this paper.

\begin{theorem}\label{thm:HLSsto}
Let $T>0$, $p,q\in (1,\infty)$ and $\alpha\in (0,1)$ such that
\begin{equation*}
2 - \frac{\alpha}2 > \frac1p+\frac1q.
\end{equation*}
Then, there exists a set $\mathcal{N}\subset \Omega$ of zero measure  which depends on $T$ and $\alpha$ such, $\forall \omega \notin\mathcal{N}$, $\forall f\in L^p([0,T])$, $\forall g\in L^q([0,T])$, the following inequality holds 
\begin{equation}\label{ineq:HLSsto}
\left|\int_0^T\int_0^T f(t)|W_t(\omega)-W_s(\omega)|^{-\alpha}g(s) ds dt\right|\leq \|f\|_{L^p([0,T])}\|g\|_{L^q([0,T])}.
\end{equation}
\end{theorem}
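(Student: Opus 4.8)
The plan is to extract the improved exponent relation from the Gaussian moment identity $\mathbb{E}\big[|W_t-W_s|^{-\alpha}\big]=c_\alpha\,|t-s|^{-\alpha/2}$, which holds precisely because $\alpha<1$, and then to transfer the resulting mean estimate to almost every path. First one reduces matters to $f,g\ge0$ (since the absolute value of the integrand in \eqref{ineq:HLSsto} dominates $|f|\,|W_t-W_s|^{-\alpha}|g|$), and then to $f,g$ ranging over a fixed countable family $\mathcal{D}$ that is dense in the unit balls of $L^p([0,T])$ and $L^q([0,T])$, say nonnegative rational step functions. For $f,g\in\mathcal{D}$, Tonelli's theorem and the moment identity give
$$\mathbb{E}\Big[\int_0^T\!\!\int_0^T f(t)|W_t-W_s|^{-\alpha}g(s)\,ds\,dt\Big]=c_\alpha\int_0^T\!\!\int_0^T f(t)|t-s|^{-\alpha/2}g(s)\,ds\,dt\le C_{p,q}\,\|f\|_{L^p}\|g\|_{L^q},$$
the last step being Theorem~\ref{thm:HLS} used with the exponent $\alpha/2\in(0,1)$ on the bounded interval $[0,T]$, Hölder's inequality absorbing the admissible gap $2-\tfrac{\alpha}{2}>\tfrac1p+\tfrac1q$. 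This is where ``$2-\alpha$'' is replaced by ``$2-\alpha/2$''.

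The second, and main, step is to upgrade this in-mean bound to an almost sure one, uniform over $\mathcal{D}\times\mathcal{D}$ (hence, by density and the boundedness just obtained, over all admissible $f,g$). The natural device is a Whitney decomposition in the time lag: write $[0,T]^2=\bigsqcup_{k\ge0}R_k\cup R_\infty$ with $R_k=\{(t,s):2^{-k-1}T<|t-s|\le2^{-k}T\}$ and $R_\infty=\{|t-s|>T/2\}$, and estimate the contribution of each $R_k$ using the a.s.\ modulus of continuity of $W$ (so that the oscillation of $W$ over a time interval of length $2^{-k}$ is $\lesssim_\omega 2^{-k/2}(k{+}1)^{1/2}$), together with the occupation‑times formula and the a.s.\ finiteness of $\sup_a L_T^a(\omega)$, the Brownian local time. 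The $R_k$‑piece is then controlled by a quantity geometric in $2^{-k}$ with ratio governed by $2^{-(2-\alpha/2-1/p-1/q)}<1$, the logarithmic factors from the modulus of continuity being summable against this gain; in the regime where $\tfrac1p$ is close to $1$ and no direct Hölder estimate inside $R_k$ is available, one instead applies the layer‑cake formula $|x|^{-\alpha}=\alpha\int_0^\infty\lambda^{-\alpha-1}\mathbf{1}_{\{|x|<\lambda\}}\,d\lambda$ on $R_k$ together with $|\{s\in I:|W_s-W_t|<\lambda\}|\lesssim_\omega\lambda$. Summing over $k$ and adding the harmless bulk term $R_\infty$ gives, for each $(f,g)\in\mathcal{D}\times\mathcal{D}$, a bound $C(\omega)\|f\|_{L^p}\|g\|_{L^q}$ in which $C(\omega)$ depends on the path only through $\sup_a L_T^a$, a Hölder seminorm of $W$ on $[0,T]$, and $\sup_{[0,T]}|W|$; since $\mathcal{D}$ is countable, one exceptional null set, depending only on $T$ and $\alpha$, serves for all pairs at once. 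The strictness $2-\tfrac{\alpha}{2}>\tfrac1p+\tfrac1q$ is finally what leaves room to normalise the constant to the form displayed in \eqref{ineq:HLSsto}.

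The hard part is exactly this second step. The potential $|W_t(\omega)-W_s(\omega)|^{-\alpha}$ is singular not merely on the diagonal $\{t=s\}$ but on the whole random level set $\{W_t=W_s\}$, so the stochastic kernel lies in no better (weak‑)Lebesgue class than the classical one, and there is no kernel estimate one could simply quote: the gain is an averaging effect and must be pushed to almost every path. Making the Whitney estimates uniform across dyadic scales and across $f$ and $g$, and in particular handling the pieces $R_k$ when the local‑time singularity is borderline non‑integrable against the available Hölder exponent, is precisely where the quantitative regularity of the Brownian path — its modulus of continuity and the boundedness of its local time — has to do the real work.
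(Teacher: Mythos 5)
Your first step correctly locates the origin of the exponent $\alpha/2$: since $W_t-W_s\sim\sqrt{|t-s|}\,Z$ and $\alpha<1$, one has $\mathbb{E}\bigl[|W_t-W_s|^{-\alpha}\bigr]=c_\alpha|t-s|^{-\alpha/2}$, and the in-mean bound follows from the classical Theorem \ref{thm:HLS}. You also correctly identify that the entire difficulty is the almost-sure, uniform upgrade of this averaged estimate. But the machinery you propose for that upgrade has a genuine quantitative gap. The input you actually invoke is the global bound $\sup_a L_T^a(\omega)<\infty$ together with the modulus of continuity of $W$. Run the occupation-time computation with only these: for an interval $I$ of length $h$ and $t$ within distance $h$ of $I$,
\begin{equation*}
\int_I |W_s-W_t|^{-\alpha}\,ds=\int_{\mathbb{R}}|a-W_t|^{-\alpha}L_I^a\,da
\le \sup_a L_I^a\cdot\frac{2\delta^{1-\alpha}}{1-\alpha}+\delta^{-\alpha}h
\quad\text{for every }\delta>0,
\end{equation*}
and optimizing gives $\lesssim(\sup_a L_I^a)^{\alpha}h^{1-\alpha}$. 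With $\sup_a L_I^a\le\sup_a L_T^a=C(\omega)$ this is $C(\omega)^{\alpha}h^{1-\alpha}$, so the Whitney piece $R_k$ carries the ratio $2^{-k(2-\alpha-1/p-1/q)}$, and you recover only the \emph{classical} relation $2-\alpha>1/p+1/q$ — the factor-of-two gain, which is the whole content of the theorem, is lost. To make your pathwise route produce $h^{1-\alpha/2}$ per inner integral you would need $\sup_a L_I^a\lesssim_\omega\sqrt{|I|\log(1/|I|)}$ \emph{uniformly over all subintervals $I$ of $[0,T]$}, i.e.\ the sharp modulus of continuity of Brownian local time in the time variable (a theorem of Perkins/Cs\"org\H{o}--R\'ev\'esz), which you do not invoke and which is far from "the a.s.\ finiteness of $\sup_a L_T^a$". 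The same defect affects your fallback bound $|\{s\in I:|W_s-W_t|<\lambda\}|\lesssim_\omega\lambda$: the implicit constant there is again $\sup_a L_I^a$, and with the global constant it is off by $h^{1/2}$.

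This is precisely the point where the paper takes a different, and essentially probabilistic, route: the gain from $\alpha$ to $\alpha/2$ is an averaging effect, and the paper captures it by proving that \emph{all} moments of the cell integrals scale correctly, $\mathbb{E}\bigl[(\int_{t_j}^{t_{j+1}}\int_{t_k}^{t_{k+1}}|W_t-W_s|^{-\alpha}\,ds\,dt)^p\bigr]\lesssim C^p p\,(p!)\,h_N^{p(2-\alpha/2)}$ (Lemma \ref{lem:Moments}, via the Markov property and induction over ordered simplices), which yields uniform exponential moments, hence a Chernoff bound and a union bound over all dyadic cells and all scales $N$ (Corollary \ref{cor:Prob}). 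Your first-moment identity gives no such concentration, and Markov's inequality at first-moment level cannot survive the union over $\sim 4^N$ cells per scale. Two smaller points: the theorem asserts the inequality with constant exactly $1$, which the paper obtains by absorbing the polynomial factors $\kappa N$ into $h_N^{-\nu}$ as $N\to\infty$; your path-dependent constant $C(\omega)$, built from global quantities that do not shrink with the scale, cannot be normalised away by the strict inequality alone. The reduction to a countable dense family and the Fatou/density passage at the end are fine and mirror the paper.
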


We can see that our result does not give an equality between the exponents of integrability and $\alpha$ but an inequality. However, it is enough for our purpose. We also remark that the relation is very different from the one in Theorem \ref{thm:HLS} since $\alpha$ is divided by a factor of $2$. This is due to the stochastic nature of the potential $|W_t-W_s|^{-\alpha}$ and, somehow, is a consequence of the scaling invariance of the Wiener process. This modification has a dramatic impact on the integrability assumptions of $f$ and $g$ and, as a consequence, we obtain the following stochastic Strichartz estimates for the propagator $(P_{s,t})_{s,t\geq 0}$ given by \eqref{eq:propagator}.

\begin{defi}
For any $(q,p)\in (1,\infty)^2$ we say that $(q,p)$ is sub-admissible if
\begin{equation*}
\frac{2}{q} > \frac{d}{2}\left(\frac 1 2 - \frac 1 p \right).
\end{equation*}
\end{defi}
\begin{prop}\label{prop:Strichartz}
Let $T>0$ and $(q,p)$ sub-admissible. Then, there exists two constants $C_1,C_2>0$ and a set $\mathcal{N}\subset \Omega$ of zero measure which depends on $d$, $T$, $p$ and $q$ such that, $\forall \omega \notin \mathcal{N}$, $\forall f\in L^2(\mathbb{R}^d)$ and $\forall g\in L^{r'}([0,T];L^{l'}(\mathbb{R}^d))$, the following inequalities holds 
\begin{align}
\left\|P_{0,\cdot}(\omega) f \right\|_{L^q([0,T];L^p(\mathbb{R}^d))}&\leq  C_1\|f\|_{L^2}\label{eq:Stri1},
\\ \left\|\int_0^T P_{s,\cdot}(\omega) g(s)ds \right\|_{L^q([0,T];L^p(\mathbb{R}^d))} &\leq C_2 \|g\|_{L^{r'}([0,T];L^{l'}(\mathbb{R}^d))},\label{eq:Stri2}
\end{align}
for any $(r,l)$ sub-admissible.
\end{prop}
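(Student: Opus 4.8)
The plan is to run the classical $TT^{*}$/duality derivation of Strichartz estimates (as in \cite{keel1998endpoint}), with Theorem~\ref{thm:HLSsto} playing the role usually taken by the deterministic Hardy--Littlewood--Sobolev inequality. I would first record two structural ingredients. The first is a \emph{dispersive estimate} read off the explicit kernel \eqref{eq:propagator}: for $p\in[2,\infty]$ and $\mathbb{P}$-a.e.\ $\omega$,
\[
\|P_{s,t}(\omega)\varphi\|_{L^{p}(\mathbb{R}^{d})}\;\le\;C_{d,p}\,|W_{t}(\omega)-W_{s}(\omega)|^{-d(1/2-1/p)}\,\|\varphi\|_{L^{p'}(\mathbb{R}^{d})},
\]
obtained by interpolating the $L^{1}\to L^{\infty}$ bound, with constant $|4\pi(W_{t}-W_{s})|^{-d/2}$ equal to the supremum of the modulus of the kernel, against the $L^{2}$-isometry $\|P_{s,t}(\omega)\varphi\|_{L^{2}}=\|\varphi\|_{L^{2}}$ (the symbol $e^{-i|\xi|^{2}(W_{t}-W_{s})}$ has modulus one). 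The second consists of the composition and adjoint identities $P_{0,t}(\omega)P_{s,0}(\omega)=P_{s,t}(\omega)$ and $P_{0,t}(\omega)^{*}=P_{t,0}(\omega)$, both immediate from the Fourier-multiplier form of $P$.

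For \eqref{eq:Stri1}, set $Tf:=P_{0,\cdot}(\omega)f$, so that $\|T\|_{L^{2}\to L^{q}_{t}L^{p}_{x}}^{2}=\|TT^{*}\|_{L^{q'}_{t}L^{p'}_{x}\to L^{q}_{t}L^{p}_{x}}$ and, by the identities above, $TT^{*}g(t)=\int_{0}^{T}P_{s,t}(\omega)g(s)\,ds$. With $\alpha:=d(\tfrac12-\tfrac1p)$, Minkowski's inequality and the dispersive estimate give $\|TT^{*}g(t)\|_{L^{p}_{x}}\le C_{d,p}\int_{0}^{T}|W_{t}(\omega)-W_{s}(\omega)|^{-\alpha}\|g(s)\|_{L^{p'}_{x}}\,ds$; dualising the $L^{q}_{t}$ norm against a nonnegative $\phi\in L^{q'}([0,T])$ and then invoking Theorem~\ref{thm:HLSsto} with both exponents equal to $q'$ and with this $\alpha$ bounds $\|TT^{*}g\|_{L^{q}_{t}L^{p}_{x}}$ by $C_{d,p}\,\|\phi\|_{L^{q'}}\,\big\|\,\|g(\cdot)\|_{L^{p'}_{x}}\,\big\|_{L^{q'}}\le C_{d,p}\,\|g\|_{L^{q'}_{t}L^{p'}_{x}}$, provided $2-\tfrac{\alpha}{2}>\tfrac{2}{q'}$. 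A one-line computation shows that this last condition is exactly $\tfrac2q>\tfrac d2(\tfrac12-\tfrac1p)$, i.e.\ sub-admissibility; the borderline $p=2$ ($\alpha=0$) is treated separately using only $L^{2}$-conservation and H\"older's inequality in time. This yields \eqref{eq:Stri1} with $C_{1}=C_{d,p}^{1/2}$, the exceptional null set being that of Theorem~\ref{thm:HLSsto} for $(T,\alpha)$ together with the $\mathbb{P}$-negligible set on which $\{(s,t)\in[0,T]^{2}:W_{t}(\omega)=W_{s}(\omega)\}$ fails to have zero Lebesgue measure (a standard Brownian property), which is all the kernel bound requires.

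For \eqref{eq:Stri2}, I would use the factorisation $\int_{0}^{T}P_{s,t}(\omega)g(s)\,ds=U(t)\left(\int_{0}^{T}U(s)^{*}g(s)\,ds\right)$ with $U(t):=P_{0,t}(\omega)$, again a consequence of $U(t)U(s)^{*}=P_{s,t}(\omega)$. Estimating the outer $U(\cdot)$ by \eqref{eq:Stri1} for the pair $(q,p)$ and the $L^{2}$-norm of $h:=\int_{0}^{T}U(s)^{*}g(s)\,ds$ by the dual form of \eqref{eq:Stri1} for the pair $(r,l)$, i.e.\ $\|h\|_{L^{2}}\le C_{1}(r,l)\,\|g\|_{L^{r'}_{t}L^{l'}_{x}}$, gives \eqref{eq:Stri2} with $C_{2}=C_{1}(q,p)\,C_{1}(r,l)$. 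Since the previous step furnishes, off the Theorem~\ref{thm:HLSsto} null set attached to $(T,\alpha)$, a \emph{constant-free} bound, so that $C_{1}(r,l)$ depends on $(r,l)$ only through the harmless interpolation constant $C_{d,l}$, a single null set valid for all sub-admissible $(r,l)$ at once is obtained by exhausting the relevant range of $\alpha$ by a countable set and using that the exceptional sets in Theorem~\ref{thm:HLSsto} may be taken monotone in $\alpha$. Note that since the Duhamel-type term here is the untruncated integral $\int_{0}^{T}$, no Christ--Kiselev-type lemma is needed.

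Given Theorem~\ref{thm:HLSsto}, the proof is therefore standard $TT^{*}$ bookkeeping, interpolation for the dispersive estimate, and a countable union of null sets. The point that deserves the most attention --- and the real heart of the matter --- is that feeding the random potential $|W_{t}-W_{s}|^{-\alpha}$ into Theorem~\ref{thm:HLSsto}, where $\alpha$ is effectively halved compared with the deterministic inequality, is precisely what turns the classical admissibility \emph{equality} into the sub-admissibility \emph{inequality}; a secondary technical point is to make the formal propagator \eqref{eq:propagator} and its kernel bound rigorous for a.e.\ $\omega$ (in particular, only $|W_{t}-W_{s}|^{d/2}$ enters, so the branch of $(W_{t}-W_{s})^{d/2}$ for odd $d$ is immaterial). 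As presented, the argument covers $\alpha\in(0,1)$, i.e.\ $2\le p<\tfrac{2d}{d-2}$ when $d\ge3$, which is the range in which Theorem~\ref{thm:HLSsto} is available.
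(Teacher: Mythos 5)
Your proposal is correct and follows the same $TT^{*}$ strategy as the paper: the dispersive estimate obtained by interpolating the $L^{1}\to L^{\infty}$ kernel bound against the $L^{2}$ isometry, followed by an application of Theorem \ref{thm:HLSsto} with $\alpha=d(1/2-1/p)$ and both time exponents equal to $q'$, so that the hypothesis $2-\alpha/2>2/q'$ of the stochastic HLS inequality becomes exactly sub-admissibility; this is essentially word for word the paper's derivation of \eqref{eq:Stri1}. The one place where you genuinely diverge is \eqref{eq:Stri2}: the paper first proves the diagonal inhomogeneous estimate ($(r,l)=(q,p)$, its \eqref{eq:Stri2b}) by duality from $I(f,f)$, and then reaches general $(r,l)$ by interpolating with the trivial $L^{1}_{t}L^{2}_{x}\to L^{q}_{t}L^{p}_{x}$ bound, whereas you factor the untruncated Duhamel term as $U(t)\int_{0}^{T}U(s)^{*}g(s)\,ds$ and chain the homogeneous estimate for $(q,p)$ with its dual for $(r,l)$. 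Your route is the more robust of the two: it decouples the two sub-admissible pairs completely and immediately yields every pair $(r,l)$, while the paper's interpolation a priori only reaches exponents on the segment joining $(q',p')$ to $(1,2)$. Two further points where your write-up is more careful than the paper's: you note that the exceptional null set must be made uniform over the family of exponents $\alpha=d(1/2-1/l)$ arising from the various $(r,l)$ (the paper fixes $\mathcal{N}$ for a single $\alpha$ and never revisits this), and you flag explicitly that Theorem \ref{thm:HLSsto} is only available for $\alpha\in(0,1)$, i.e.\ $p<2d/(d-2)$ for $d\geq 3$, a restriction the paper leaves implicit. Neither point is a defect of your argument.
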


Thanks to the previous result, we are able to prove Theorem \ref{thm:main} by classical arguments. The rest of the paper is devoted to the proof of Theorem \ref{thm:HLSsto} in section \ref{sec:proofThmHLSsto} and the proofs of Proposition \ref{prop:Strichartz} and Theorem \ref{thm:main} in section \ref{sec:proofThmMain}.

\section{Proof of Theorem \ref{thm:HLSsto}}\label{sec:proofThmHLSsto}

Let $T>0$, $p,q\in (1,\infty)$, $\alpha\in (0,1)$ and $\upsilon>0$ such that
\begin{equation*}
2 - \frac{\alpha}2 -\upsilon =  \frac1p+\frac1q.
\end{equation*}
Before proceeding any further, let us remark that we can, without loss of generality, assume that $f\in L^p([0,T])$ and $g\in L^q([0,T])$ are non-negative functions. 

Let $N\in\mathbb{N}^*$ and $f^{(N)},g^{(N)}$ be two simple function given by\begin{equation*}
f^{(N)}(t) = \sum_{j = 1}^{2^N-1} f_j \bold{1}_{[t_j,t_{j+1})}(t)\quad\textrm{and}\quad g^{(N)}(s) = \sum_{k = 1}^{2^N-1} g_k \bold{1}_{[t_k,t_{k+1})}(s),
\end{equation*}
where $(f_j)_{1\leq j\leq 2^N-1},(g_k)_{1\leq k\leq 2^N-1}\subset\mathbb{R}^+$ and $(t_j)_{1\leq j\leq 2^N-1}$ is a uniform discretization of $[0,T]$ such that
\begin{equation*}
t_j : =  j h_N, \quad \forall j\in\{0,\cdots,2^N\},
\end{equation*}
with
\begin{equation*}
h_n :=\frac{T}{2^N}.
\end{equation*}
We remark that
\begin{equation*}
\|f^{(N)}\|_{L^p} = \left(\sum_{j = 1}^{2^N-1} f_j^p h \right)^{1/p}\quad\textrm{and}\quad\|g^{(N)}\|_{L^q} = \left(\sum_{k = 1}^{2^N-1} g_k^q h \right)^{1/q}.
\end{equation*}
Furthermore, it follows that
\begin{align*}
\int_0^T\int_0^T f^{(N)}(t)|W_t(\omega)-W_s(\omega)|^{-\alpha}g^{(N)}(s) ds dt &=
\\ &\hspace{-8em} \sum_{j,k = 1}^{2^N-1}f_jg_k \int_{t_j}^{t_{j+1}}\int_{t_k}^{t_{k+1}}|W_t(\omega)-W_s(\omega)|^{-\alpha}dsdt.
\end{align*}
From here, we need the following proposition.
\begin{prop}\label{prop:Sets}
There exists a sequence of sets $\left\{\Omega_{\varepsilon}\right\}_{\varepsilon>0}$ which depends on $T$ and $\alpha$ such that 
\begin{enumerate}
\item $\Omega_{\varepsilon_1}\subset \Omega_{\varepsilon_2}$ for all $\varepsilon_1<\varepsilon_2$,
\item $\mathbb{P}(\Omega_{\varepsilon}) \geq 1-\varepsilon$ for all $\varepsilon>0$,
\item there exists a $N_{\varepsilon}$ large enough such that, $\forall \nu>0$, $\forall \omega\in \Omega_{\varepsilon}$, 
\begin{equation}
\int_{t_j}^{t_{j+1}}\int_{t_k}^{t_{k+1}}|W_t(\omega)-W_s(\omega)|^{-\alpha}dsdt \leq h_{N_{\varepsilon}}^{2-\alpha/2 - \nu}.
\end{equation}
\end{enumerate}

\end{prop}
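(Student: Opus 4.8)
The plan is to prove, for every $\nu>0$, a \emph{pathwise} bound
\[
\int_{t_j}^{t_{j+1}}\!\int_{t_k}^{t_{k+1}}|W_t(\omega)-W_s(\omega)|^{-\alpha}\,ds\,dt\ \le\ C_{\alpha}\,h_N^{\,2-\alpha/2}\,Z_{j,k}(\omega),\qquad 0\le j,k\le 2^N-1,
\]
with random variables $Z_{j,k}$ that are identically distributed and have finite moments of every order, uniformly in $j,k,N$; the uniform estimate $\max_{j,k}(\cdots)\le h_N^{2-\alpha/2-\nu}$ on an event of large probability then follows by feeding a union bound over the $2^{2N}$ blocks into the decay $h_N^{\kappa\nu/2}$ supplied by those moments, and the sets $\Omega_\varepsilon$ come out of a Borel--Cantelli argument.

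To get the pathwise bound I would fix once and for all $q_0\in(1,1/\alpha)$ (a nonempty range since $\alpha<1$) with conjugate exponent $q_0'$, and rewrite each block integral through the occupation times formula for Brownian motion. Writing $\ell^a_j(\omega)$ for the local time at level $a$ accumulated by $W(\omega)$ on $[t_j,t_{j+1}]$ — so that $\int_{\mathbb{R}}\ell^a_j\,da=h_N$ and $\ell_j$ is supported in $I_j:=[\min_{[t_j,t_{j+1}]}W,\max_{[t_j,t_{j+1}]}W]$ — one has, applying the occupation times formula in $s$ and then (after Fubini) in $t$,
\[
\int_{t_j}^{t_{j+1}}\!\int_{t_k}^{t_{k+1}}|W_t-W_s|^{-\alpha}\,ds\,dt=\int_{\mathbb{R}}\!\int_{\mathbb{R}}|a-b|^{-\alpha}\,\ell^a_j\,\ell^b_k\,da\,db .
\]
Hölder's inequality with exponents $q_0,q_0'$ on $I_j\times I_k$ (the kernel being in $L^{q_0}$ there since $\alpha q_0<1$), combined with the elementary estimate $\int_I\int_J|a-b|^{-\alpha q_0}\,da\,db\le C_{\alpha,q_0}(|I|\,|J|)^{1-\alpha q_0/2}$, bounds the right-hand side by $C_{\alpha,q_0}(|I_j|\,|I_k|)^{1/q_0-\alpha/2}\|\ell_j\|_{L^{q_0'}}\|\ell_k\|_{L^{q_0'}}$, with $1/q_0-\alpha/2>0$. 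Now Brownian scaling on each block: the increments of $W$ over the disjoint intervals $[t_j,t_{j+1}]$ being independent, the processes $\widetilde B^{(j)}_v:=h_N^{-1/2}(W_{t_j+h_Nv}-W_{t_j})$, $v\in[0,1]$, are i.i.d.\ standard Brownian motions, and a change of variables gives $|I_j|=h_N^{1/2}R_j$ and $\|\ell_j\|_{L^{q_0'}}=h_N^{1/2+1/(2q_0')}\|L_1(\widetilde B^{(j)})\|_{L^{q_0'}}$, where $R_j$ is the range of $\widetilde B^{(j)}$ on $[0,1]$ and $L_1(\widetilde B^{(j)})$ its local time at time $1$. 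Substituting, and using $1/q_0+1/q_0'=1$, all powers of $h_N$ collapse to exactly $h_N^{2-\alpha/2}$, leaving
\[
Z_{j,k}:=\big(R_jR_k\big)^{1/q_0-\alpha/2}\,\|L_1(\widetilde B^{(j)})\|_{L^{q_0'}}\,\|L_1(\widetilde B^{(k)})\|_{L^{q_0'}} .
\]
The $Z_{j,k}$ are identically distributed and have finite moments of every order: $R_j$ has Gaussian tails, $\|L_1(\widetilde B^{(j)})\|_{L^{q_0'}}\le\big(\sup_aL^a_1(\widetilde B^{(j)})\big)^{1/q_0}$ since the total occupation time of $[0,1]$ is $1$, and $\sup_aL^a_1\in L^p(\Omega)$ for all $p<\infty$, a classical fact about Brownian local time.

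With the pathwise bound in hand, fix $\nu>0$, put $Y_j:=R_j^{1/q_0-\alpha/2}\|L_1(\widetilde B^{(j)})\|_{L^{q_0'}}$, so that $\max_{j,k}Z_{j,k}=(\max_jY_j)^2$, and estimate by Markov's inequality and a crude union bound, for any integer $\kappa\ge1$,
\[
\mathbb{P}\Big(\max_{0\le j,k\le 2^N-1}\int_{t_j}^{t_{j+1}}\!\int_{t_k}^{t_{k+1}}|W_t-W_s|^{-\alpha}\,ds\,dt>h_N^{2-\alpha/2-\nu}\Big)\le\mathbb{P}\big(\max_{j}Y_j>c_\alpha h_N^{-\nu/2}\big)\le 2^N c_\alpha^{-\kappa}h_N^{\kappa\nu/2}\,\mathbb{E}[Y^\kappa].
\]
Since $h_N=T2^{-N}$, taking $\kappa>2/\nu$ makes the right-hand side summable in $N$, so Borel--Cantelli gives, for $\mathbb{P}$-a.e.\ $\omega$, a finite scale $N_\nu(\omega)$ beyond which $\max_{j,k}\int\!\int|W_t-W_s|^{-\alpha}\,ds\,dt\le h_N^{2-\alpha/2-\nu}$. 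To organise this into the $\Omega_\varepsilon$: for each $\varepsilon>0$ and each $m\in\mathbb{N}^*$ choose a deterministic $n_{\varepsilon,m}$, monotone in $\varepsilon$, with $\mathbb{P}(N_{1/m}>n_{\varepsilon,m})<\varepsilon 2^{-m}$, and set $\Omega_\varepsilon:=\bigcap_{m\ge1}\{N_{1/m}\le n_{\varepsilon,m}\}$; then $\{\Omega_\varepsilon\}$ is monotone (item~1), $\mathbb{P}(\Omega_\varepsilon^c)\le\sum_m\varepsilon 2^{-m}=\varepsilon$ (item~2), and on $\Omega_\varepsilon$, given any $\nu>0$ one picks $m$ with $1/m\le\nu$ and takes $N_\varepsilon:=n_{\varepsilon,m}$, which (using $h_N<1$) yields $\max_{j,k}\int\!\int|W_t-W_s|^{-\alpha}\,ds\,dt\le h_{N_\varepsilon}^{2-\alpha/2-1/m}\le h_{N_\varepsilon}^{2-\alpha/2-\nu}$ — item~3, with $N_\varepsilon$ depending on $\nu$, which is the natural reading of the statement.

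The step I expect to be the real obstacle is precisely the recourse to local time in the derivation of the pathwise bound. A direct Gaussian computation of the $\kappa$-th moment of a single block integral converges only for $\kappa<1/\alpha$, because the negative Gaussian moment $\mathbb{E}[|W_t-W_s|^{-\alpha\kappa}]$ (which equals $c_{\alpha\kappa}|t-s|^{-\alpha\kappa/2}$) is finite only when $\alpha\kappa<1$; and $1/\alpha$ moments are nowhere near enough to beat a union bound over $2^{2N}$ blocks as $N\to\infty$. Rewriting the block integral via the occupation times formula is what converts these obstructive negative-power Gaussian moments into positive powers of the range and of the local time of Brownian motion — quantities with moments of all orders — and that is exactly what makes the union bound converge. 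Everything else is routine: keeping the exponent $1/q_0-\alpha/2$ positive is the only use of $q_0<1/\alpha$, and the bookkeeping of the $\nu$-dependent scales $N_\varepsilon$ is elementary.
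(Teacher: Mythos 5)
Your argument is correct, but it reaches Proposition \ref{prop:Sets} by a genuinely different route than the paper. The paper never produces a pathwise factorization of the block integral; it estimates all of its moments directly, proving
\[
\mathbb{E}\Big[\Big(\int_{t_j}^{t_{j+1}}\!\int_{t_k}^{t_{k+1}}|W_t-W_s|^{-\alpha}\,ds\,dt\Big)^p\Big]\ \lesssim\ C^p\,p\,(p!)\,h_N^{p(2-\alpha/2)}
\]
by rescaling the block to unit size, ordering the $2p$ time points, and peeling off the largest one at each step via the Markov property together with the uniform bound $\sup_y\int_{\mathbb{R}}|x-y|^{-\alpha}G_1(x)\,dx<\infty$, the simplex volumes absorbing the factorials coming from the permutations. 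This yields finite \emph{exponential} moments of $h_N^{-(2-\alpha/2)}$ times the block integral, hence a tail $e^{-cN\kappa}$ at the threshold $\kappa N h_N^{2-\alpha/2}$, which beats the union bound over the $2^{2N}$ blocks with only a logarithmic loss ($N\sim\log(1/h_N)$, later swallowed by $h_N^{-\nu}$). You instead extract the exponent $2-\alpha/2$ \emph{pathwise} through the occupation-times formula, H\"older, and Brownian scaling, reducing the probabilistic input to moments of the range and of $\sup_a L_1^a$ for a standard Brownian motion on $[0,1]$; polynomially many moments ($\kappa>2/\nu$) then suffice because your threshold already carries the slack $h_N^{-\nu}$. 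Your diagnosis of why naive moment bounds on the integrand fail ($\mathbb{E}|W_t-W_s|^{-\alpha\kappa}=\infty$ for $\alpha\kappa\geq1$) is precisely the obstruction the paper's iterated-conditioning computation is built to circumvent, so the two proofs attack the same difficulty by different means: yours is more conceptual and scales transparently, at the price of importing the classical (but nontrivial) fact that $\sup_a L_1^a$ has moments of all orders and of a weaker tail estimate — immaterial here, since the proposition only claims the exponent $2-\alpha/2-\nu$. Two harmless discrepancies to note: your $N_\varepsilon$ depends on $\nu$, but so does the paper's (its bound $\kappa_\varepsilon N h_N^{2-\alpha/2}\leq h_N^{2-\alpha/2-\nu}$ likewise requires $N$ large depending on $\nu$), and the density argument that follows actually needs the estimate for \emph{all} large $N$, which your version supplies directly; and your construction, exactly like the paper's, really produces $\Omega_{\varepsilon_1}\supset\Omega_{\varepsilon_2}$ for $\varepsilon_1<\varepsilon_2$ rather than the stated inclusion — a bookkeeping slip in the statement itself, with no effect on the almost-sure conclusion.
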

Assume for a moment that Proposition \ref{prop:Sets} holds. Then, for any $N\geq N_{\varepsilon}$, we can deduce that, thanks to Jensen's inequality and since $2-\alpha/2 - \upsilon = 1/p+1/q$,
\begin{align*}
\int_0^T\int_0^T f^{(N)}(t)|W_t(\omega)-W_s(\omega)|^{-\alpha}g^{(N)}(s) ds dt&
 \\&\hspace{-12em}\leq \sum_{j,k = 1}^{2^N-1}f_jg_k h_N^{\frac1p+\frac1q}
\\&\hspace{-12em}\leq \left( \sum_{j= 1}^{2^N-1} f_j h_N^{\frac1p} \right)\left( \sum_{k= 1}^{2^N-1} g_k h_N^{\frac1q} \right)
\\&\hspace{-12em}\leq  \left( \sum_{j= 1}^{2^N-1} f_j^p h_N \right)^{\frac1p}\left( \sum_{k= 1}^{2^N-1} g_k^q h_N \right)^{\frac1q} = \|f^{(N)}\|_{L^p(\mathbb{R}^d)}\|g^{(N)}\|_{L^q(\mathbb{R}^d)},
\end{align*}
which is exactly \eqref{ineq:HLSsto}. By a density argument and Fatou's lemma, we obtain that, $\forall \varepsilon>0$, there exists a set $\Omega_{\varepsilon}$ such that, $\forall \omega\in \Omega_{\varepsilon}$, $\forall f\in L^p([0,T])$, $\forall g\in L^q([0,T])$, the following inequality holds
\begin{equation}\label{ineq:HLSstopr}
\int_0^T\int_0^T f(t)|W_t(\omega)-W_s(\omega)|^{-\alpha}g(s) ds dt\leq \|f\|_{L^p([0,T])}\|g\|_{L^q([0,T])}.
\end{equation}
Since this estimate is uniform in $\varepsilon$, we can pass to the limit $\varepsilon\to 0$. Set $\varepsilon = 1/n$, we have
\begin{equation*}
\mathbb{P}\left[\cap_{n = 1}^{+\infty}\Omega_{1/n}\right] = \lim_{n\to+\infty}\mathbb{P}[\Omega_{1/n}] \geq 1.
\end{equation*}
Hence, there exists a set $\mathcal{N}$ of zero measure, which depends on $\alpha$ and $T$, such that, $\forall \omega\notin \mathcal{N}$, $\forall f \in L^p([0,T])$ and $\forall g\in L^q([0,T])$, the estimate \eqref{ineq:HLSstopr} holds. Thus, to conclude the proof of Theorem \ref{thm:HLSsto}, it remains to prove Proposition \ref{prop:Sets}. Before proceeding further, we need some technical results.

Let us begin with the following estimate.
\begin{lemma}\label{lem:Moments}
Let $p\geq 1$ and $j,k\in\mathbb{N}$. There exists a constant $C>0$ such that the following estimate holds
\begin{equation*}
\mathbb{E}\left[ \left( \int_{t_j}^{t_{j+1}}\int_{t_k}^{t_{k+1}} |W_t-W_s|^{-\alpha}dsdt\right)^p \right] \lesssim C^{p} p(p!) h_N^{p(2-\alpha/2)}.
\end{equation*}
\end{lemma}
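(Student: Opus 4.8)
The plan is to estimate the $p$-th moment of $X_{j,k} := \int_{t_j}^{t_{j+1}}\int_{t_k}^{t_{k+1}} |W_t-W_s|^{-\alpha}\,ds\,dt$ by expanding the $p$-th power, applying Fubini, and controlling the resulting expectation of a product of $p$ negative powers of Gaussian increments. First I would write
\[
\E[X_{j,k}^p] = \int_{[t_j,t_{j+1}]^p}\int_{[t_k,t_{k+1}]^p} \E\Big[\prod_{m=1}^p |W_{t_m}-W_{s_m}|^{-\alpha}\Big]\, ds\, dt,
\]
and the key point is a pointwise bound on the integrand. Using the generalized Hölder inequality with exponents $p$ on each factor, $\E[\prod_{m=1}^p |W_{t_m}-W_{s_m}|^{-\alpha}] \le \prod_{m=1}^p \big(\E[|W_{t_m}-W_{s_m}|^{-\alpha p}]\big)^{1/p}$. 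Each increment $W_{t_m}-W_{s_m}$ is centered Gaussian with variance $|t_m-s_m|$, so $\E[|W_{t_m}-W_{s_m}|^{-\alpha p}] = c_{\alpha p}\, |t_m-s_m|^{-\alpha p/2}$, where $c_{\beta} = \E[|Z|^{-\beta}]$ for a standard normal $Z$ is finite precisely when $\beta<1$ — which forces the constant to depend on $p$ and is the source of the combinatorial factor. One has the explicit formula $c_\beta = \pi^{-1/2} 2^{-\beta/2}\Gamma\big(\tfrac{1-\beta}{2}\big)$; here, however, $\beta = \alpha p$ can be large, so instead I would bound $\E[|W_{t_m}-W_{s_m}|^{-\alpha p}]$ directly: since $|W_{t_m}-W_{s_m}|^{-1}$ has all moments of order $<1$ controlled, and more usefully $\P(|W_{t}-W_{s}|\le \delta\sqrt{|t-s|}) \le C\delta$, a layer-cake computation gives $\E[|W_{t_m}-W_{s_m}|^{-\alpha p}] \le C^{p} \Gamma(\alpha p) |t_m-s_m|^{-\alpha p/2}$ or similar, and Stirling converts $\Gamma(\alpha p)^{1/p}$-type contributions into the claimed $p(p!)$-flavoured growth.

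Assembling this, $\E[X_{j,k}^p] \le C^p \big(\text{const}(p)\big) \Big(\int_{t_j}^{t_{j+1}}\int_{t_k}^{t_{k+1}} |t-s|^{-\alpha/2}\,ds\,dt\Big)^p$, because the product over $m$ of $|t_m-s_m|^{-\alpha/2}$ factorizes the multiple integral into the $p$-th power of a single double integral. It then remains to bound $\int_{t_j}^{t_{j+1}}\int_{t_k}^{t_{k+1}} |t-s|^{-\alpha/2}\,ds\,dt \le C\, h_N^{2-\alpha/2}$ uniformly in $j,k$: this is an elementary deterministic estimate (the classical Hardy–Littlewood–Sobolev kernel integrated over two intervals of length $h_N$, worst case when they are adjacent or equal), and since $\alpha/2<1$ it indeed scales like $h_N^{2-\alpha/2}$. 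Raising to the $p$-th power yields $h_N^{p(2-\alpha/2)}$, matching the statement, with the surviving $p$-dependent constant being exactly the $p(p!)$ (up to the universal $C^p$) coming from the Gaussian negative-moment bound.

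The main obstacle is getting the $p$-dependence of the Gaussian negative moments sharp enough — i.e. showing $\E[|Z|^{-\alpha p}] \lesssim C^p p!$ (up to polynomial factors in $p$) rather than something growing faster. This is where one must be careful: $c_{\alpha p} = \pi^{-1/2}2^{-\alpha p/2}\Gamma(\tfrac{1-\alpha p}{2})$ is literally infinite once $\alpha p \ge 1$, so the naive Hölder split above is too crude and must be replaced by a smarter grouping — for instance, applying Hölder with exponents slightly larger than $1$ on each factor only for a bounded number of factors, or truncating $|W_t - W_s|^{-\alpha} = |W_t-W_s|^{-\alpha}\ind{|W_t-W_s|\ge \theta} + |W_t-W_s|^{-\alpha}\ind{|W_t-W_s|<\theta}$ and optimizing $\theta$. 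A clean alternative is to use the representation $|W_t-W_s|^{-\alpha} = c_\alpha \int_0^\infty \lambda^{\alpha-1} \E_{\text{aux}}[\cos(\lambda(W_t-W_s))]\,d\lambda$-type Fourier identities, but I expect the truncation/layer-cake route to be the one the authors take, since it interfaces directly with the small-ball estimate $\P(|W_t-W_s|<\delta\sqrt{|t-s|})\lesssim\delta$ and produces a factorial from the $\int_0^\infty$ tail.
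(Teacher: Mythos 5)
Your proposal contains a genuine gap at its central step. The generalized H\"older bound $\mathbb{E}\bigl[\prod_{m=1}^p |W_{t_m}-W_{s_m}|^{-\alpha}\bigr] \le \prod_{m=1}^p \bigl(\mathbb{E}[|W_{t_m}-W_{s_m}|^{-\alpha p}]\bigr)^{1/p}$ is vacuous as soon as $\alpha p\ge 1$: by scaling, $\mathbb{E}[|W_{t}-W_{s}|^{-\beta}]=|t-s|^{-\beta/2}\,\mathbb{E}[|Z|^{-\beta}]$ with $Z$ standard Gaussian, and $\mathbb{E}[|Z|^{-\beta}]=+\infty$ for every $\beta\ge1$ because $\mathbb{P}(|Z|\le\delta)\sim c\delta$ near zero. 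You correctly diagnose this in your last paragraph, but the proposed repair $\mathbb{E}[|W_{t_m}-W_{s_m}|^{-\alpha p}]\le C^p\Gamma(\alpha p)|t_m-s_m|^{-\alpha p/2}$ cannot hold (the left-hand side is literally infinite), and the other alternatives (truncation, Fourier representation) are left as speculation and do not obviously close the argument, since the real difficulty is the strong correlation between the increments $W_{t_m}-W_{s_m}$ for different $m$, which any factor-by-factor bound ignores.

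The paper's proof avoids high negative moments entirely. After scaling the intervals to unit length, it decomposes $[j,j+1]^p\times[k,k+1]^p$ into ordered simplices (this is where the $p!$ and, in the diagonal case $j=k$, the $(2p)!/p!$ combinatorial factors originate), and then conditions sequentially on the filtration at the ordered time points: by the Markov property and the elementary bound $\sup_{y}\int_{\mathbb{R}}|x-y|^{-\alpha}G_1(x)\,dx\le C$ (valid precisely because $\alpha<1$), each factor contributes a conditional expectation bounded by $C(\delta t_n)^{-\alpha/2}$, where $\delta t_n$ is the gap to the immediately preceding ordered time, \emph{uniformly} in the conditioning. The resulting product $\prod_n(\delta t_n)^{-\alpha/2}$ of consecutive-gap singularities integrates over the simplex to give an extra $1/p!$, and assembling the counts yields $C^p p\,(p!)$. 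That sequential-conditioning step is the key idea missing from your plan; without it (or an equivalent device that exploits the joint law of the increments rather than each marginal separately), the $p$-th moment cannot be controlled along the route you describe. Your concluding deterministic estimate $\int_{t_j}^{t_{j+1}}\int_{t_k}^{t_{k+1}}|t-s|^{-\alpha/2}\,ds\,dt\lesssim h_N^{2-\alpha/2}$ is correct but never becomes usable, because the reduction to it fails.
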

\begin{proof}
We first remark that, since $t_j = jh$ and by the scaling property of the Brownian motion,
\begin{align*}
\mathbb{E}\left[ \left( \int_{t_j}^{t_{j+1}}\int_{t_k}^{t_{k+1}} |W_t-W_s|^{-\alpha}dsdt\right)^p \right] &
\\ &\hspace{-8em} = h^{p(2-\alpha/2)}\mathbb{E}\left[ \left( \int_{j}^{j+1}\int_{k}^{k+1} |W_t-W_s|^{-\alpha}dsdt\right)^p \right].
\end{align*}
We now separate the proof in two parts: the first part where $j>k$, which, by symmetry, also gives the case $j>k$, and the second part when $j=k$.

\noindent\textbf{Step 1: the case $j>k$.} By denoting $\mathbf{t} = (t_1,t_2,\cdots,t_p)\in\mathbb{R}^p$ and $\mathbf{s} = (s_1,s_2,\cdots,s_p)\in\mathbb{R}^p$, we remark that
\begin{align*}
\mathbb{E}\left[ \left(  \int_{j}^{j+1}\int_{k}^{k+1} |W_t-W_s|^{-\alpha}dsdt\right)^p \right] &= \int_{[j,j+1]^p} \mathbb{E}\left[ \prod_{n = 1}^p \int_{k}^{k+1}  |W_{t_n}-W_{s}|^{-\alpha}ds\right]d\mathbf{t} 
\\ &\hspace{-9em}= (p!)\int_{\boldsymbol{\Delta}_p([j,j+1])}\mathbb{E}\left[\prod_{n = 1}^p \int_{k}^{k+1}  |W_{t_n}-W_{s}|^{-\alpha}ds\right]d\mathbf{t}
\\ &\hspace{-9em}= (p!)\int_{\boldsymbol{\Delta}_p([j,j+1])} \int_{[k,k+1]^p} \mathbb{E}\left[\prod_{n = 1}^p |W_{t_n}-W_{s_n}|^{-\alpha}\right]d\mathbf{s}d\mathbf{t}
\\ &\hspace{-9em}= (p!)\sum_{\sigma\in\mathfrak{G}_p}\int_{\boldsymbol{\Delta}_p([j,j+1])} \int_{\boldsymbol{\Delta}_p([k,k+1])} \mathbb{E}\left[\prod_{n = 1}^p |W_{t_n}-W_{s_{\sigma(n)}}|^{-\alpha}\right]d\mathbf{s}d\mathbf{t},
\end{align*}
where we used the fact that, for all $(s_1,s_2,\cdots,s_p)\in\mathbb{R}^p$,
\begin{equation*}
\sum_{\sigma\in\mathfrak{G}_p} \bold{1}_{s_{\sigma(1)}\leq s_{\sigma(2)}\leq \cdots\leq s_{\sigma(p)}} = 1,
\end{equation*}
with $\mathfrak{S}_p$ is the set of permutations of length $p$ and $\boldsymbol{\Delta}_{p}([\alpha,\beta]) = \{\mathbf{t}\in\mathbb{R}^p; \alpha<t_1<t_2<\cdots<t_p<\beta\}$. 

Since $k+1\leq j$, we have $s_p<t_1$ and, hence,
\begin{equation*}
s_1<s_2<\cdots<s_p<t_1<t_2<\cdots<t_p.
\end{equation*}
By the Markov property of the Brownian motion and denoting the conditional expectation, $\forall t\geq 0$, $\forall X\in L^1(\Omega)$,
\begin{equation*}
\mathbb{E}_t\left[X\right] := \mathbb{E}\left[X\middle| \mathcal{F}_t\right],
\end{equation*}
we obtain that
\begin{align*}
\mathbb{E}_{t_{p-1}}\left[\prod_{n = 1}^p |W_{t_n}-W_{s_{\sigma(n)}}|^{-\alpha}\right] &= \prod_{n = 1}^{p-1} |W_{t_n}-W_{s_{\sigma(n)}}|^{-\alpha} \mathbb{E}_{t_{p-1}}\left[ |W_{t_p}-W_{s_{\sigma(p)}}|^{-\alpha}\right]
\\ &\hspace{-10em}=  \prod_{n = 1}^{p-1}  |W_{t_n}-W_{s_\sigma(n)}|^{-\alpha}\int_{\mathbb{R}} |x_p + W_{t_{p-1}}-W_{s_\sigma(p)}|^{-\alpha} G_{t_p - t_{p-1}}(x_p) dx_p,
\end{align*}
where $(G_t)_{t\geq 0}$ is the Gaussian kernel. We remark that the following estimate holds
\begin{equation}\label{eq:estGaussPot}
\sup_{y\in\mathbb{R}}\int_{\mathbb{R}} |x-y|^{-\alpha} G_1(x) dx \leq C,
\end{equation}
for a certain constant $C>0$. It then follows from the scaling property of the Gaussian kernel, by denoting $\delta t_j = t_j-t_{j-1}$ and the estimate \eqref{eq:estGaussPot} that
\begin{align*}
\mathbb{E}_{t_{p-1}}\left[\prod_{n = 1}^p |W_{t_n}-W_{s_{\sigma(n)}}|^{-\alpha}\right] &
\\ &\hspace{-10em}= ( \delta t_p)^{-\alpha/2}\prod_{n = 1}^{p-1}  |W_{t_n}-W_{s_\sigma(n)}|^{-\alpha}\int_{\mathbb{R}} |x + (\delta t_p)^{-1}(W_{t_{p-1}}-W_{s_\sigma(p)})|^{-\alpha} G_{1}(x) dx
\\ &\hspace{-10em}\leq C ( \delta t_p)^{-\alpha/2}\prod_{n = 1}^{p-1}  |W_{t_n}-W_{s_\sigma(n)}|^{-\alpha}.
\end{align*}
By the tower property of the conditional expectations and an induction argument, we deduce that
\begin{align*}
\mathbb{E}\left[\prod_{n = 1}^p |W_{t_n}-W_{s_{\sigma(n)}}|^{-\alpha}\right] &\leq C^{p-1} \prod_{n = 2}^p  ( \delta t_n)^{-\alpha/2} \mathbb{E}\left[|W_{t_1} - W_{s_{\sigma(1)}}|^{-\alpha}\right]
\\ &\leq C^p (t_1-s_{\sigma(1)})^{-\alpha/2} \prod_{n = 2}^p  ( \delta t_n)^{-\alpha/2}.
\end{align*}
Thus, we have, since 
\begin{align*}
\mathbb{E}\left[ \left(  \int_{j}^{j+1}\int_{k}^{k+1} |W_t-W_s|^{-\alpha}dsdt\right)^p \right]&
\\ &\hspace{-12em}\leq  C^p (p!)\sum_{\sigma\in\mathfrak{S}_p} \int_{\boldsymbol{\Delta}_p([j,j+1])} \int_{\boldsymbol{\Delta}_p([k,k+1])}(t_1-s_{\sigma(1)})^{-\alpha/2} \prod_{n = 2}^p  ( \delta t_n)^{-\alpha/2}d\mathbf{s}d\mathbf{t}.
\end{align*}
We remark that, if $j = k+1$ and since $t_1\in [k+1,k+2]$,
\begin{align*}
\int_{\boldsymbol{\Delta}_p([k,k+1])}(t_1-s_{\sigma(1)})^{-\alpha/2}d\mathbf{s} &\leq \int_{\boldsymbol{\Delta}_{p-1}([k,k+1])} d\mathbf{s} \int_{k}^{k+1}(t_1-s)^{-\alpha/2} ds
\\ &\leq \frac{1}{(p-1)!}\sup_{t\in[k+1,k+2]}(t_1-k)^{1-\alpha/2} \lesssim \frac{1}{(p-1)!}.
\end{align*}
Else, if $j\geq k+2$, we have
\begin{align*}
\int_{\boldsymbol{\Delta}_p([k,k+1])}(t_1-s_{\sigma(1)})^{-\alpha/2}d\mathbf{s} &\leq\sup_{t\in[j,j+1],s\in[k,k+1]}(t-s)^{-\alpha/2}  \int_{\boldsymbol{\Delta}_{p}([k,k+1])} d\mathbf{s} 
\\ &\lesssim \frac{1}{p!}.
\end{align*}
In order to estimate the term
\begin{align*}
\int_{\boldsymbol{\Delta}_p([j,j+1])}  \prod_{n = 2}^p  ( \delta t_n)^{-\alpha/2}d\mathbf{t},
\end{align*}
we proceed by induction thanks to the following estimate, for all $2\leq n\leq p$,
\begin{align*}
\int_{k}^{t_n}(\delta t_n)^{-\alpha/2}dt_{n-1} = \int_{k}^{t_n}(t_n - t_{n-1})^{-\alpha/2}dt_{n-1} = \frac{1}{1-\alpha/2}\left(t_n - k\right)^{1-\alpha/2}\leq 1.
\end{align*}
This leads us to the following bound 
\begin{equation*}
\int_{\boldsymbol{\Delta}_p([j,j+1])}  \prod_{n = 2}^p  ( \delta t_n)^{-\alpha/2}d\mathbf{t} \leq 1.
\end{equation*}
Finally, since $\textrm{Card}(\mathfrak{S}_p) = p!$, we obtain the desired estimate
\begin{align*}
\mathbb{E}\left[ \left(  \int_{j}^{j+1}\int_{k}^{k+1} |W_t-W_s|^{-\alpha}dsdt\right)^p \right]&
\\ &\hspace{-10em}\leq \sum_{\sigma\in\mathfrak{G}_p} C^p \max(1, p) \leq C^p p (p!).
\end{align*}

\noindent\textbf{Step 2: the cas $j=k$}  By denoting $\mathbf{t} = (t_1,t_2,\cdots,t_{2p})\in\mathbb{R}^{2p}$, we remark that
\begin{align*}
\mathbb{E}\left[ \left(  \int_{j}^{j+1}\int_{j}^{j+1} |W_t-W_s|^{-\alpha}dsdt\right)^p \right] &= \mathbb{E}\left[ \int_{[j,j+1]^{2p}}  \prod_{n = 1}^p |W_{t_{n+p}}-W_{t_{n}}|^{-\alpha}d\mathbf{t} \right] 
\\ &\hspace{-6em}= \sum_{\sigma\in\mathfrak{S}_{2p}} \int_{\boldsymbol{\Delta}_{2p}([j,j+1])} \mathbb{E}\left[\prod_{n = 1}^p |W_{t_{\sigma(n+p)}}-W_{t_{\sigma(n)}}|^{-\alpha}\right]d\mathbf{t}.
\end{align*}
Let $k = \textrm{argmax}_{1\leq \ell \leq 2p} \sigma(\ell)$. Then, we have
\begin{align*}
\mathbb{E}_{t_{2p-1}}\left[\prod_{n = 1}^p |W_{t_{\sigma(n)}}-W_{t_{\sigma(n+p)}}|^{-\alpha}\right] &
\\ &\hspace{-8em}= \prod_{\substack{n = 1\\ n\neq k}}^p |W_{t_{\sigma(n)}}-W_{t_{\sigma(n+p)}}|^{-\alpha}\mathbb{E}_{t_{2p-1}}\left[ |W_{t_{2p}}-W_{t_{\sigma(k+p)}}|^{-\alpha}\right]
\end{align*}
We have, thanks to scaling property of the Gaussian kernel and estimate \eqref{eq:estGaussPot},
\begin{align*}
\mathbb{E}_{t_{2p-1}}\left[ |W_{t_{2p}}-W_{t_{\sigma(k+p)}}|^{-\alpha}\right] &= \int_{\mathbb{R}} \left|x + W_{t_{2p-1}} -  W_{t_{\sigma(k+p)}}\right|^{-\alpha}G_{t_{2p}-t_{2p-1}}(x) dx
\\ &= (\delta t_{2p})^{-\alpha/2} \int_{\mathbb{R}} \left|x + (\delta t_{2p})^{-1}(W_{t_{2p-1}} -  W_{t_{\sigma(k+p)}})\right|^{-\alpha}G_{1}(x) dx
\\ &\leq C(\delta t_{2p})^{-\alpha/2}.
\end{align*}
By repeating this procedure, we obtain a $p$-tuple $\boldsymbol{k} = (k_1,k_2,\cdots, k_p)\in \{1,\cdots,2p\}^{p}$ such that, by integrating out the $p$ singularities in time,
\begin{align*}
\int_{\boldsymbol{\Delta}_{2p}([j,j+1])} \mathbb{E}\left[\prod_{n = 1}^p |W_{t_{\sigma(n+p)}}-W_{t_{\sigma(n)}}|^{-\alpha}\right]d\mathbf{t} &\leq C^p \int_{\boldsymbol{\Delta}_{2p}([j,j+1])}\prod_{n \in \boldsymbol{k}} (\delta t_n)^{-\alpha/2}d\mathbf{t}
\\ &\lesssim \frac{C^p}{p!}
\end{align*}
We deduce that
\begin{align*}
\mathbb{E}\left[ \left(  \int_{j}^{j+1}\int_{j}^{j+1} |W_t-W_s|^{-\alpha}dsdt\right)^p \right] &\lesssim \sum_{\sigma\in\mathfrak{S}_{2p}} \frac{C^p}{p!}
\\ &\lesssim C^p \frac{(2p)!}{p!}.
\end{align*}
It follows from Stirling approximation that
\begin{align*}
\frac{(2p)!}{p!} \lesssim \frac{(2p)^{2p + 1/2}e^{-2p}}{p^{p+1/2}e^{-p}}  =2^{1/2} 4^p p^pe^{-p} \simeq \frac{4^p}{\sqrt{p}} p! \leq p 4^p p! ,
\end{align*}
which gives the desired estimate.
\end{proof}

The previous result enables us to deduce the following Lemma.
\begin{cor}\label{cor:Prob}
We have the following limit
\begin{equation*}
\lim_{\kappa\to+\infty}\mathbb{P}\left[ \left\{ \int_{t_j}^{t_{j+1}}\int_{t_k}^{t_{k+1}} |W_t-W_s|^{-\alpha}dsdt > \kappa h_N^{2-\alpha/2}, \; 1\leq j,k \leq 2^N-1,\; \forall N\geq 1\right\}\right] = 0.\end{equation*}
\end{cor}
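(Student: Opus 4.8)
The plan is to take the event in the corollary literally, as the requirement that the displayed inequality hold simultaneously for every dyadic level and every admissible pair. Writing
\[
X_{N,j,k} := \int_{t_j}^{t_{j+1}}\int_{t_k}^{t_{k+1}} |W_t-W_s|^{-\alpha}\,ds\,dt,
\]
the event is
\[
A_\kappa := \bigcap_{N\geq 1}\ \bigcap_{1\leq j,k\leq 2^N-1}\big\{\, X_{N,j,k} > \kappa\, h_N^{2-\alpha/2}\,\big\},
\]
and the goal is $\mathbb{P}[A_\kappa]\to 0$ as $\kappa\to+\infty$. The only probabilistic input I would use is Lemma \ref{lem:Moments}, phrased for the normalized quantities $Y_{N,j,k} := h_N^{-(2-\alpha/2)} X_{N,j,k}$. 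The scaling identity recorded at the very start of the proof of that lemma gives, for every $p\geq 1$,
\[
\mathbb{E}\big[Y_{N,j,k}^p\big] = \mathbb{E}\bigg[\Big(\int_j^{j+1}\int_k^{k+1}|W_t-W_s|^{-\alpha}\,ds\,dt\Big)^p\bigg] \leq C^p\, p\,(p!),
\]
a bound that is uniform in $(N,j,k)$ because the power $h_N^{p(2-\alpha/2)}$ supplied by the lemma is exactly the normalization appearing in the threshold.

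First I would record, via Markov's inequality applied with the $p$-th moment, the single-pair tail estimate
\[
\mathbb{P}\big[Y_{N,j,k} > \kappa\big] \leq \kappa^{-p}\,\mathbb{E}\big[Y_{N,j,k}^p\big] \leq C^p\, p\,(p!)\,\kappa^{-p},
\]
valid for every $(N,j,k)$ and every $p\geq 1$. Next I would use that $A_\kappa$ is an intersection, hence contained in each of its slices. Restricting to the coarsest level $N=1$, where $2^N-1 = 1$ forces $j=k=1$, gives the inclusion $A_\kappa \subseteq \{ Y_{1,1,1} > \kappa\}$, so that
\[
\mathbb{P}[A_\kappa] \leq \mathbb{P}\big[ Y_{1,1,1} > \kappa\big] \leq C^p\, p\,(p!)\,\kappa^{-p}.
\]
Since the moment bound is uniform in the indices, any single pair would serve equally well here; the structural point is simply that membership in $A_\kappa$ forces at least one normalized integral to exceed $\kappa$.

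Finally, fixing any $p\geq 1$ — already $p=1$ yields $\mathbb{P}[A_\kappa]\leq C\kappa^{-1}$ — and letting $\kappa\to+\infty$ gives $\mathbb{P}[A_\kappa]\to 0$, which is the assertion. Presented this way, the analytic substance is carried entirely by Lemma \ref{lem:Moments}, so the hard part lies upstream of the corollary: it is the uniform $p$-th moment control, obtained there from the Gaussian-kernel bound \eqref{eq:estGaussPot} together with the time-ordering and permutation combinatorics of Steps 1 and 2. Within the corollary itself the only points needing care are (i) that the factor $h_N^{p(2-\alpha/2)}$ in the lemma matches the normalization $h_N^{2-\alpha/2}$ in the threshold, so that the bound for $Y_{N,j,k}$ is genuinely index-free, and (ii) that this bound is available for whichever pair one dominates by, including the diagonal case $j=k$ treated in Step 2. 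Granting these, the corollary follows from the displayed one-line tail estimate.
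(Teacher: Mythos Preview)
Your argument is correct for the event as literally written --- read as an intersection over all $N$ and all $(j,k)$, the inclusion $A_\kappa\subset\{Y_{1,1,1}>\kappa\}$ together with the first moment bound from Lemma~\ref{lem:Moments} already gives $\mathbb{P}[A_\kappa]\leq C\kappa^{-1}\to 0$.

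The issue is that this is not the statement the paper actually proves, nor the one needed downstream. The paper's own proof makes two silent corrections to the displayed event: the threshold is $\kappa N h_N^{2-\alpha/2}$ (with an extra factor $N$), and the event is treated as a \emph{union} over $N$ and $(j,k)$, as is clear from the union bound $\sum_{N\geq 1}\sum_{j,k}e^{-N\kappa/(2C)}$ that concludes it. This is also what Proposition~\ref{prop:Sets} requires: one needs $\mathbb{P}[\Omega_\varepsilon]\geq 1-\varepsilon$ where $\Omega_\varepsilon$ is the event that \emph{every} normalized integral is bounded, so its complement is the union of the exceedance events.

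For that intended statement your one-slice reduction cannot work. At level $N$ there are $\sim 4^N$ pairs $(j,k)$, and summing a polynomial tail $\kappa^{-p}$ over all $N$ and pairs diverges regardless of $p$. The paper's route is to upgrade Markov to exponential Chebyshev: the moment bound $\mathbb{E}[Y_{N,j,k}^p]\lesssim C^p p(p!)$ is summed against $\theta^p/p!$ with $\theta=1/(2C)$ to give a uniform exponential moment, whence $\mathbb{P}[Y_{N,j,k}>\kappa N]\lesssim e^{-\kappa N/(2C)}$. This decays fast enough in $N$ to absorb the $4^N$ count once $\kappa>4C\log 2$, and dominated convergence finishes. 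So while the analytic substance still lies in Lemma~\ref{lem:Moments}, the corollary is not a one-liner: the passage from moment bounds to a summable tail over all scales is precisely the content you bypassed by restricting to $N=1$.
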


\begin{proof}
Let $\kappa> 4C\log(2) $ where $C$ is the constant from Lemma \ref{lem:Moments}. Denote $\theta = h_N^{\alpha-2}/(2C)$, we have,thanks to Chebyshev's inequality, for all $1\leq j,k\leq 2^N-1$,
\begin{align*}
\mathbb{P}\left[  \int_{t_j}^{t_{j+1}}\int_{t_k}^{t_{k+1}} |W_t-W_s|^{-\alpha}dsdt > \kappa N h_N^{2-\alpha/2}\right] &= \mathbb{P}\left[  e^{\theta \int_{t_j}^{t_{j+1}}\int_{t_k}^{t_{k+1}} |W_t-W_s|^{-\alpha}dsdt } > e^{N\frac{\kappa}{2C}}\right]
\\ &\leq  e^{-N\frac{\kappa}{2C}} \mathbb{E}\left[e^{\theta\int_{t_j}^{t_{j+1}}\int_{t_k}^{t_{k+1}} |W_t-W_s|^{-\alpha}dsdt }\right].
\end{align*}
It then follows from Lemma \ref{lem:Moments} that
\begin{align*}
 \mathbb{E}\left[e^{\theta\int_{t_j}^{t_{j+1}}\int_{t_k}^{t_{k+1}} |W_t-W_s|^{-\alpha}dsdt }\right] &= \sum_{p = 1}^{+\infty} \frac{\theta^p}{p!} \mathbb{E}\left[\left(\int_{t_j}^{t_{j+1}}\int_{t_k}^{t_{k+1}} |W_t-W_s|^{-\alpha}dsdt\right)^p\right]
\\ &\lesssim \sum_{p =0 }^{+\infty} p \theta^p C^p h_N^{p(2-\alpha/2)} =  \sum_{p =0 }^{+\infty} p 2^{-p} \lesssim 1.  
\end{align*}
Thus, we have
\begin{align*}
\mathbb{P}\left[  \int_{t_j}^{t_{j+1}}\int_{t_k}^{t_{k+1}} |W_t-W_s|^{-\alpha}dsdt > \kappa N h_N^{2-\alpha/2}\right] \lesssim  e^{-N\frac{\kappa}{2C}},
\end{align*}
which leads to
\begin{align*}
\mathbb{P}\left[ \left\{ \int_{t_j}^{t_{j+1}}\int_{t_k}^{t_{k+1}} |W_t-W_s|^{-\alpha}dsdt > \kappa N h_N^{2-\alpha/2}, \; 1\leq j,k\leq 2^N-1,\; \forall N\geq 1\right\}\right]&
\\ &\hspace{-30em} \lesssim \sum_{N = 1}^{+\infty} \sum_{j,k = 1}^{2^N-1} e^{-N\frac{\kappa}{2C}} =\sum_{N = 1}^{+\infty} 2^{2N-2}e^{-N\frac{\kappa}{2C}} \simeq \sum_{N = 1}^{+\infty} e^{-N(\frac{\kappa}{2C}-2\log(2))} = : I(\kappa).
\end{align*}
Hence, since $\kappa> 4C\log(2) $, we have that $I(\kappa)<+\infty$. Furthermore, by theorem of dominated convergence, we deduce
\begin{equation*}
I(\kappa) \underset{\kappa\to+\infty}{\to}0.
\end{equation*} \end{proof}

We can now proceed to prove Proposition \ref{prop:Sets}. By denoting
\begin{equation*}
\Omega_{\varepsilon} : = \left\{ \int_{t_j}^{t_{j+1}}\int_{t_k}^{t_{k+1}} |W_t-W_s|^{-\alpha}dsdt \leq \kappa_{\varepsilon} N h_N^{2-\alpha/2}, \; 1\leq j,k \leq 2^N-1\; \forall N\geq N_0\right\},
\end{equation*}
we deduce from Corollary \ref{cor:Prob} that, $\forall \varepsilon>0$, there exist $\kappa_{\varepsilon}>0$ such that
\begin{equation*}
\mathbb{P}(\Omega_{\varepsilon}) \geq 1- \varepsilon.
\end{equation*}
Furthermore, we see that $\kappa_{\varepsilon}$ is non-increasing with respect to $\varepsilon$ and, thus, we deduce that the sequence $\{\Omega_{\varepsilon}\}_{\varepsilon>0}$ is increasing, \textit{i.e.} $\Omega_{\varepsilon_1}\subset \Omega_{\varepsilon_2}$, for $\varepsilon_1<\varepsilon_2$. We finally remark that $\Omega_{\varepsilon}$ depends on $\alpha$ and $T$.

\section{Proof of Proposition \ref{prop:Strichartz} and Theorem \ref{thm:main}}\label{sec:proofThmMain}

We can now proceed to prove the Strichartz estimates by the $TT^*$ strategy (see \cite{keel1998endpoint}) and use them in a fixed-point argument  (see \cite{cazenave2003semilinear}) to prove the global well-posedness of Equation \eqref{eq:main}. 

\subsection{Proof of Proposition \ref{prop:Strichartz}}

We deduce, since $P_{s,t}(\omega)$ is an isometry from $L^2$ to itself, thanks to the Hausdorff-Young inequality and an interpolation argument, that, $\forall p\in [2,\infty]$, $\forall \varphi\in L^{p'}(\mathbb{R}^d)$,
\begin{equation}\label{ineq:dispest}
\|P_{s,t}(\omega)\varphi\|_{L^p(\mathbb{R}^d)} \lesssim \frac{1}{|W_t(\omega)-W_s(\omega)|^{d(1/2-1/p)}} \|\varphi\|_{L^{p'}(\mathbb{R}^d)},
\end{equation}
where $p'$ is the Hölder conjugate of $p$.

Let $T>0$, $(q,p)$ sub-admissible and $\omega\notin \mathcal{N}$ where $\mathcal{N}$ is given by Theorem \ref{thm:HLSsto} with $\alpha = d(1/2-1/p)$. We denote $\left(P^{*}_{s,t}\right)_{t,s\geq 0}$ the adjoint of the propagator of the white noise dispersion, that is
\begin{equation*}
P^*_{s,t}\varphi(x) : = \mathcal{F}^{-1}\left(e^{i |\xi|^2(W_t-W_s)} \hat{\varphi}(\xi)\right) = P_{t,s}\varphi(x).
\end{equation*}
This leads, in particular, to the fact that
\begin{equation*}
P^*_{s,t} = P_{t,s},\quad P_{0,s}^*P_{0,t} =  P_{s,t}\quad\textrm{and}\quad P_{s,t}P^*_{r,t} = P_{s,r},\; \forall r\in[s,t].
\end{equation*}
We consider the integral, $\forall f,g\in \mathcal{C}([0,T], \mathcal{C}^{\infty}_0(\mathbb{R}^d))$,
\begin{align*}
I(f,g) = \left| \int_0^T\int_0^T \langle P_{0,t} f(s), P_{0,s} g(t)\rangle_{L^2} ds dt \right| &= \left| \int_0^T\int_0^T \langle P_{0,s}^* f(s), P_{0,t}^*g(t)\rangle_{L^2} ds dt \right|
\\ &= \left| \int_0^T\int_0^T \langle P_{s,t} f(s), g(t)\rangle_{L^2} ds dt \right|
\end{align*}
It follows by Hölder's inequality, \eqref{ineq:dispest} and Theorem \ref{thm:HLSsto}, that  $\forall p\in(1,\infty)$,
\begin{align*}
I(f,g) &\leq  \int_0^T\int_0^T \|P_{s,t}(\omega)f(s)\|_{L^{p}(\mathbb{R}^d)}\|g(t)\|_{L^{p'}(\mathbb{R}^d)} ds dt
\\&\lesssim\int_0^T\int_0^T |W_t(\omega)-W_s(\omega)|^{d(1/2-1/p)}\|f(t)\|_{L^{p'}(\mathbb{R}^d)}\|g(s)\|_{L^{p'}(\mathbb{R}^d)} ds dt
\\&\lesssim \|f\|_{L^{q_1}([0,T],L^{p'}(\mathbb{R}^d))}\|g\|_{L^{q_2}([0,T],L^{p'}(\mathbb{R}^d))},
\end{align*}
where $q_1,q_2\in (1,\infty)$ verify
\begin{equation*}
2 - \frac{d}{2}\left(\frac12-\frac1p\right) > \frac 1{q_1} + \frac1{q_2}.
\end{equation*}
Setting $q_1 = q_2 = q'$, the previous inequality becomes
\begin{equation*}
\frac2q> \frac{d}{2}\left(\frac12-\frac1p\right).
\end{equation*}
This yields, on one hand, that
\begin{equation}\label{eq:StriHomoDual}
\left\|\int_0^T P_{0,s}^*(\omega) f(s) ds \right\|_{L^2(\mathbb{R}^d)}^2 = I(f,f) \lesssim  \|f\|_{L^{q'}([0,T],L^{p'}(\mathbb{R}^d))}^2,
\end{equation}
and, on another hand, by a duality argument,
\begin{equation}\label{eq:Stri2b}
\left\| \int_0^T P_{s,\cdot}(\omega) f(s) ds \right\|_{L^{q}([0,T],L^p(\mathbb{R}^d))} \lesssim\|f\|_{L^{q'}([0,T],L^{p'}(\mathbb{R}^d))}
\end{equation}
We are now in position to prove \eqref{eq:Stri1} and \eqref{eq:Stri2}. It follows from \eqref{eq:StriHomoDual} that, $\forall f\in L^2(\mathbb{R}^d)$ and $\forall g\in L^{q'}([0,T];L^{p'}(\mathbb{R}^d))$,
\begin{align*}
\int_{0}^T \langle P_{0,t}(\omega)f,g(t) \rangle_{L^2} dt &= \left\langle f, \int_0^T P_{0,t}^*(\omega)g(t)\right\rangle_{L^2} \leq \|f\|_{L^2(\mathbb{R}^d)} \left\|\int_0^T P_{0,t}^*(\omega) g(t) ds \right\|_{L^2(\mathbb{R}^d)}^2
\\ &\lesssim \|f\|_{L^2(\mathbb{R}^d)} \|g\|_{L^{q'}([0,T],L^{p'}(\mathbb{R}^d))},
\end{align*}
which leads to \eqref{eq:Stri1} by a duality argument. We now turn to \eqref{eq:Stri2}. We have, by \eqref{eq:StriHomoDual},
\begin{align*}
\left\|\int_0^T P_{s,\cdot}(\omega) f(s) ds\right\|_{L^q([0,T];L^p(\mathbb{R}^d))} &\leq \int_0^T \left\|P_{s,\cdot} f(s) \right\|_{L^q([0,T];L^p(\mathbb{R}^d))} ds
\\ &\leq  \int_0^T \|f(s)\|_{L^2(\mathbb{R}^d))} ds = \|f\|_{L^1([0,T];L^2(\mathbb{R}^d))}.
\end{align*}
Thanks to this estimate and an interpolation argument with \eqref{eq:Stri2b}, we deduce \eqref{eq:Stri2}. This concludes the proof of Proposition \ref{prop:Strichartz}. 

\subsection{Proof of Theorem \ref{thm:main}}

We can now apply the previous result to solve the global Cauchy problem of \eqref{eq:main}. First, we rewrite the equation in its mild formulation, $\forall t\in[0,T]$ and $\forall x\in\mathbb{R}^d$,
\begin{equation}\label{eq:mWNSchrodinger}
\psi(t,x) = P_{0,t}\psi_0(x) - i\lambda \int_0^t P_{s,t}|\psi|^{2\sigma}\psi(s,x) ds,
\end{equation}
and assume that $\sigma< \frac{4} d$.
Let $T>0$ and $(q,p)$ be sub-admissible that we will fix later and we consider equation \eqref{eq:mWNSchrodinger} for all $\omega\notin\mathcal{N}$, where $\mathcal{N}$ is given by Proposition \ref{prop:Strichartz}. We consider the  mapping $\Gamma$ from $L^{q}([0,T];L^p(\mathbb{R}^d))$ to itself given by
\begin{equation}\label{eq:GammaMap}
\Gamma(\psi)(t,x) = P_{0,t}\psi_0(x) - i\lambda \int_0^T P_{s,t}|\psi|^{2\sigma}\psi(s,x) ds.
\end{equation}
We denote $B_{R,L^{q}([0,T];L^p(\mathbb{R}^d))}$ a closed ball of $L^{q}([0,T];L^p(\mathbb{R}^d))$ of radius $R>0$ that will be set later. For any $\psi \in B_{R,L^{q}([0,T];L^p(\mathbb{R}^d))}$, we apply the $L^q([0,T];L^p(\mathbb{R}^d))$ norm to \eqref{eq:GammaMap} and deduce, thanks to \eqref{eq:Stri1} and \eqref{eq:Stri2},
\begin{align*}
\|\Gamma(\psi)\|_{L^{q}([0,T];L^p(\mathbb{R}^d))}\leq C_1\|\psi_0\|_{L^2(\mathbb{R}^d)} + C_2|\lambda| \left\|\psi\right\|_{L^{r'(2\sigma+1)}([0,T];L^{l'(2\sigma+1)} (\mathbb{R}^d))}^{2\sigma+1}.
\end{align*}
for any $(r,l)$ sub-admissible. By choosing $(q,p) = (r,l) = (a,2\sigma+2)$, with $a$ such that 
\begin{equation}\label{ineq:achoice}
\frac{d\sigma}4< \frac{2(\sigma+1)}a< 1,
\end{equation}
we have that $(a,2\sigma+2)$ is sub-admissible and that
\begin{equation*}
l' = \frac{l}{l-1} = \frac{2\sigma+2}{2\sigma +1}.
\end{equation*}
Hence, we obtain, by Hölder's inequality,
\begin{equation*}
\left\|\psi\right\|_{L^{r'(2\sigma+1)}([0,T];L^{l'(2\sigma+1)} (\mathbb{R}^d))}^{2\sigma+1} = \left\|\psi\right\|_{L^{r'(2\sigma+1)}([0,T];L^{2\sigma+2} (\mathbb{R}^d))}^{2\sigma+1} \leq T^{1-\frac{2\sigma +2}a} \left\|\psi\right\|_{L^{a}([0,T];L^{2\sigma+2} (\mathbb{R}^d))}^{2\sigma+1},
\end{equation*}
which gives us
\begin{equation}\label{eq:Gammaest1}
\|\Gamma(\psi)\|_{L^{q}([0,T];L^p(\mathbb{R}^d))}\leq C_1\|\psi_0\|_{L^2(\mathbb{R}^d)} + C_2|\lambda| T^{1-\frac{2\sigma +2}a} \left\|\psi\right\|_{L^{a}([0,T];L^{2\sigma+2} (\mathbb{R}^d))}^{2\sigma+1}.
\end{equation}
By similar computations, we obtain that, $\forall \psi_1,\psi_2\in B_{R,L^{q}([0,T];L^p(\mathbb{R}^d))}$,
\begin{equation}\label{eq:Gammaest2}
\|\Gamma(\psi_1) - \Gamma(\psi_2)\|_{L^{q}([0,T];L^p(\mathbb{R}^d))}\leq C_2|\lambda| T^{1-\frac{2\sigma +2}a} R^{2\sigma}\left\|\psi_1 - \psi_2\right\|_{L^{a}([0,T];L^{2\sigma+2} (\mathbb{R}^d))}.
\end{equation}
We remark that, thanks to \eqref{ineq:achoice}, we have
\begin{equation*}
1 - \frac{2\sigma+2}a > 0.
\end{equation*}
Hence, by setting
\begin{equation*}
R = 2 C_1\|\psi_0\|_{L^2(\mathbb{R}^d)},
\end{equation*}
and taking $T>0$ small enough to have
\begin{equation*}
C_2|\lambda| T^{1-\frac{2\sigma +2}a} R^{2\sigma}< 1,
\end{equation*}
we can see that $\Gamma$ is a contraction from $B_{R,L^{a}([0,T];L^{2\sigma+2}(\mathbb{R}^d))}$ to itself. It follows from a Banach fixed point theorem that there exists a unique local solution to \eqref{eq:mWNSchrodinger} in $B_{R,L^{q}([0,T];L^p(\mathbb{R}^d))}$. Since $R$ and $T$ are independent of this solution, we can iterate this procedure to construct a solution in $B_{R,L^{a}([0,+\infty[;L^{2\sigma+2}(\mathbb{R}^d))}$ which concludes the proof of Theorem \ref{thm:main}.
\bibliographystyle{plain}
\bibliography{Bibliography.bib}

\end{document}